\def\div{\operatorname{div}}
\def\NN{\mathbb{N}}
\def\RR{\mathbb{R}}
\def\A{\mathcal{A}}
\def\K{\mathcal{K}}
\def\M{\mathcal{M}}
\def\R{\mathcal{R}}
\def\V{\mathcal{V}}
\def\H{\mathcal{H}}
\def\dt{\partial_t}
\def\dtau{d_\tau}
\newcommand{\fmtsec}[1]{\SI[round-precision=2,round-mode=places]{#1}{\second}}
\begin{document}
\title{A parallel-in-time solver for nonlinear\\degenerate time-periodic parabolic problems}
\titlerunning{Parallel-in-time solver for nonlinear time-periodic problems}
\author{Herbert Egger\inst{1,2}
\and
Andreas Schafelner\inst{1}
}
\authorrunning{H. Egger and A. Schafelner}
\institute{$^1$Institute of Numerical Mathematics, Johannes Kepler University Linz\\
$^2$Johann Radon Institute for Computational and Applied Mathematics\\
\email{herbert.egger@jku.at,andreas.schafelner@jku.at}}
\maketitle          
\begin{abstract}
A class of abstract nonlinear time-periodic evolution problems is considered which arise in electrical engineering and other scientific disciplines. 
An efficient solver is proposed for the systems arising after discretization in time based on a fixed-point iteration. Every step of this iteration amounts to the solution of a discretized time-periodic and time-invariant problem for which efficient parallel-in-time methods are available. Global convergence with contraction factors independent of the discretization parameters is established. Together with an appropriate initialization step, a highly efficient and reliable solver is obtained. The applicability and performance of the proposed method is illustrated by simulations of a power transformer. Further comparison is made with other solution strategies proposed in the literature.
\keywords{Parallel-in-time methods \and Nonlinear time-periodic problems \and Mesh-independent convergence.}
\end{abstract}

\section{Introduction}
\label{sec:intro}

We consider the efficient numerical solution of degenerate nonlinear time-periodic problems of the general abstract form
\begin{alignat}{2}
\M \partial_t u(t) + \K(u(t)) &= f(t) \qquad && \text{in } \V^*, \ t \in [0,T] \label{eq:sys1}\\
\M u(0) &= \M u(T) \qquad && \text{in } \H^*. \label{eq:sys2}
\end{alignat}
Here $\V \subset \H$ are two real Hilbert spaces, $\H^* \subset \V^*$ the corresponding dual spaces, $\K : \V \to \V^*$ is assumed strongly monotone and Lipschitz, and $\M : \H \to \H^*$ to be continuous, symmetric, and non-negative. Existence of a unique solution to such problems can be established by monotonicity arguments; see e.g.~\cite{Showalter:1997,Zeidler:1990a}.

Systems of this kind have important applications of electrical engineering, e.g., in the modelling of electric motors and power transformers, and much research has been devoted to their analysis and efficient numerical solution. 
Results about the existence of a unique solution to the time-harmonic magneto-quasistatic problem can be found in \cite{BachingerLangerSchoeberl:2005a,Ceserano:2024} based on non-constructive arguments. 
The first reference further studies the numerical approximation by a Fourier-Galerkin method in time together with a finite element discretization in space. Related results can be found under the name \emph{Harmonic Balance} finite element methods in the engineering literature~\cite{Yamada:1988,Albanese:1992,Gyselink:2002}. 
While this approach seems well-suited from an approximation point of view, its implementation requires the solution of a large fully-coupled and highly nonlinear finite element system which seems to limit the actual applicability in practice~\cite{Scolaro:2024}. 
Discretizing \eqref{eq:sys1}--\eqref{eq:sys2} in time, e.g., by the implicit Euler method, leads to a loosely coupled system of nonlinear elliptic problems, whose solution can be tackled by fixed-point or Newton methods \cite{Biro:2006,Dlala:2007,Takahashi:2012}. A comparison of the approaches can be found in \cite{Plasser:2018}.
The efficient solution of \emph{linear} time-invariant methods by the \emph{Parareal} and \emph{Multigrid-in-Time} methods has be demonstrated in \cite{Gander:2013,Kulchytska:2021a} and \cite{GanderNeumueller:2016}. 
To the best of our knowledge, a rigorous convergence analysis of iterative solvers for infinite-dimensional degenerate nonlinear time-periodic problems is not available to date.

In this paper, we propose an efficient iterative solution strategy for time-periodic problems of the form \eqref{eq:sys1}--\eqref{eq:sys2}.  Similar to \cite{Biro:2006}, the system is first discretized in time and then solved iteratively by a fixed-point iteration. Every step of this iteration requires the solution of a time-periodic and time-invariant linear system. 
The well-posedness of this iteration as well as its global convergence, independent of discretization parameters, is established. 
The individual steps of the fixed-point iteration can finally be realized efficiently by the multigrid-in-time strategy proposed in \cite{GanderNeumueller:2016}. 
Together with a careful initialization strategy, we obtain a provably convergent and highly efficient parallel-in-time solver.
The main arguments used in our proofs also apply to the time-continuous problem. As a by-product of our analysis, we thus also obtain a constructive proof for the existence and uniqueness of solutions to the time-continuous system \eqref{eq:sys1}--\eqref{eq:sys2}.

\section{Preliminaries}
\label{sec:prelim}

Let $\V \subset \H$ be real Hilbert spaces with continuous embedding. We identify $\H$ with its dual $\H^*$ and obtain a Gelfand triple $\V \subset \H \subset \V^*$. 
We assume that 
\begin{itemize}[leftmargin=2.5em,topsep=0.5em,parsep=0.5em]
\item[(A1)] $\K : \V \to \V^*$ is strongly monotone and Lipschitz, i.e., 
$\langle \K(u)-\K(v),u-v\rangle \ge \gamma \|u-v\|^2_V$  and $\|\K(u) - \K(v)\|_{V^*} \le L \|u-v\|_V$
for all $u,v \in \V$ with $L,\gamma>0$; 
\item[(A2)] $\M : \H \to \H^*$ is continuous, symmetric, and non-negative, i.e., $\|\M u\|_H \le C \|u\|_\H$, $\langle \M u,v\rangle = \langle u,\M v\rangle$, and $\langle \M u,u\rangle \ge 0$ for all $u,v \in \H$. 
\end{itemize}
These general assumptions already allow to establish the following result. 
\begin{theorem} \label{thm:sys}
For any $T>0$ and $f \in L^2(0,T;V^*)$, the system \eqref{eq:sys1}--\eqref{eq:sys2} has a unique solution $u \in L^2(0,T;V)$ with $\M \dt u \in L^2(0,T;V^*)$.
\end{theorem}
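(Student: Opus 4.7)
The plan is to recast \eqref{eq:sys1}--\eqref{eq:sys2} as the fixed-point equation for a map $\Phi$ whose iteration mirrors at the continuous level the scheme later analysed in the discrete setting, and then to invoke Banach's contraction principle on $L^2(0,T;V)$. Let $J : \V \to \V^*$ denote the Riesz isomorphism of $\V$, so that $\langle Jv,w\rangle = (v,w)_V$ and $\langle Jv,v\rangle = \|v\|_V^2$. Fix a parameter $\lambda>0$ to be chosen later, and for $\bar u \in L^2(0,T;V)$ set $\Phi(\bar u):= u$, where $u$ solves the linear time-invariant, time-periodic problem
\begin{align*}
\M \dt u + \lambda J u &= f - \K(\bar u) + \lambda J \bar u \quad\text{in }\V^*,\\
\M u(0) &= \M u(T).
\end{align*}

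First I would establish well-posedness of the auxiliary linear problem $\M\dt u + \lambda J u = g$, $\M u(0) = \M u(T)$, for arbitrary $g \in L^2(0,T;V^*)$, together with the a priori bound $\lambda\|u\|_{L^2(0,T;V)} \le \|g\|_{L^2(0,T;V^*)}$. The natural route in the periodic setting is a Fourier ansatz $u(t) = \sum_{k\in\mathbb{Z}} \hat u_k \mathrm{e}^{\mathrm{i}\omega_k t}$ with $\omega_k = 2\pi k/T$, which decouples the problem into the stationary equations $(\mathrm{i}\omega_k \M + \lambda J)\hat u_k = \hat g_k$ in $\V^*$. Each of these is uniquely solvable by the Lax--Milgram lemma, because (A2) renders $\langle \mathrm{i}\omega_k \M v, v\rangle$ purely imaginary, so that $\mathrm{Re}\,\langle (\mathrm{i}\omega_k\M + \lambda J)v,v\rangle = \lambda\|v\|_V^2$. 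Mode-wise this yields $\lambda\|\hat u_k\|_V \le \|\hat g_k\|_{V^*}$; Parseval then produces the claimed $L^2$-estimate, $\M\dt u \in L^2(0,T;V^*)$ follows from the equation itself, and the trace identity $\M u(0)=\M u(T)$ is justified after establishing, in the spirit of \cite{Showalter:1997}, that $\M u \in C([0,T];\V^*)$.

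The second step is to verify that $\Phi$ is a contraction. For $u_i = \Phi(\bar u_i)$, $i=1,2$, set $w = u_1-u_2$, $\bar w = \bar u_1 - \bar u_2$, and test the difference equation with $w$. By symmetry of $\M$ and the chain rule, $\int_0^T \langle \M\dt w, w\rangle\,dt = \tfrac12[\langle \M w, w\rangle]_0^T$, which vanishes since $\M w(0) = \M w(T)$. Coercivity of $\lambda J$ then gives $\lambda\|w\|_{L^2(0,T;V)}^2 \le \int_0^T \langle \lambda J\bar w - (\K(\bar u_1)-\K(\bar u_2)), w\rangle\,dt$. A short pointwise computation using (A1), the Riesz isometry, and the inequalities $\langle \K(v_1)-\K(v_2), v_1-v_2\rangle \ge \gamma\|v_1-v_2\|_V^2$ and $\|\K(v_1)-\K(v_2)\|_{V^*}\le L\|v_1-v_2\|_V$ shows $\|\lambda J v - (\K(v_1)-\K(v_2))\|_{V^*}^2 \le (\lambda^2 - 2\lambda\gamma + L^2)\|v\|_V^2$ for $v=v_1-v_2$, and Cauchy--Schwarz followed by division yields $\|w\|_{L^2(0,T;V)} \le q(\lambda)\|\bar w\|_{L^2(0,T;V)}$ with $q(\lambda) = \lambda^{-1}\sqrt{\lambda^2 - 2\lambda\gamma + L^2}$. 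The minimiser $\lambda = L^2/\gamma$ gives $q = \sqrt{1-\gamma^2/L^2} < 1$, so Banach's theorem produces a unique fixed point $u$ with the regularity asserted in the theorem.

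The main obstacle lies in the linear auxiliary problem rather than in the fixed-point step. Because $\M$ is only non-negative and may have a non-trivial kernel, \eqref{eq:sys1} cannot be treated as an initial-value problem by propagation in time, the periodic condition must be interpreted in a space weaker than $\H$, and the integration-by-parts identity needed in the contraction step requires a rigorous framework for $\partial_t(\M u)$ in the Gelfand triple. The Fourier approach sidesteps the inversion of $\M$ and reduces the task to a family of coercive stationary problems, but the trace and chain-rule issues still need care; the formalism for degenerate evolution equations developed in \cite{Showalter:1997} is the natural setting in which to handle them.
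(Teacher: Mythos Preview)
Your proposal is correct and follows essentially the same route the paper sketches: the paper does not give a self-contained proof of Theorem~\ref{thm:sys} but defers to \cite{Showalter:1997,Zeidler:1990a} and to Remark~\ref{rem:fixedpoint}, where exactly your continuous fixed-point iteration $\M\dt u^{k+1}+\widehat\K u^{k+1}=f+\widehat\K u^k-\K(u^k)$ with periodic coupling is proposed, and your choice $\widehat\K=\lambda J$ together with the contraction calculation is precisely the content of Remark~\ref{rem:contraction}. Your Fourier argument for the linear auxiliary problem and the discussion of the trace and chain-rule issues in the degenerate setting go beyond what the paper spells out, but are in the spirit of the cited reference \cite{Showalter:1997}.
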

A proof of this assertion can be deduced from \cite{Showalter:1997,Zeidler:1990a}; also see Remark~\ref{rem:fixedpoint} below. 

Now let $T>0$ and $N \in \NN$ be given. We define $\tau=T/N$, set $t^n = n \tau$, and denote by $\dtau u^n = \frac{1}{\tau} (u^n - u^{n-1})$ the backward difference quotient in time. In the rest of the paper, we consider the following semi-discrete problem. 
\begin{problem} \label{prob:disc}
Given $(f^1,\ldots,f^N) \subset \V^*$, find $(u^1,\ldots,u^N) \subset \V$ such that 
\begin{align}
\M \dtau u^n + \K(u^n) &= f^n, \qquad n=1,\ldots,N, \label{eq:disc1} \\
\text{with} \qquad \qquad 
u^0 &= u^N. \label{eq:disc2}
\end{align}
The second equation is used to implicitly define $\dtau u^1 = \frac{1}{\tau}(u^1 - u^0) = \frac{1}{\tau}(u^1 - u^N)$.
\end{problem}

\begin{theorem} \label{thm:disc}
Let (A1)--(A2) hold. Then Problem~\ref{prob:disc} has a unique solution.
\end{theorem}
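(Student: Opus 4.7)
The plan is to recast Problem~\ref{prob:disc} as a single operator equation on the product space $\V^N$ and invoke standard monotone-operator theory. Concretely, I would define $\mathcal{F}: \V^N \to (\V^*)^N$ by
\begin{equation*}
(\mathcal{F}\mathbf{u})^n \;=\; \M \dtau u^n + \K(u^n), \qquad n=1,\ldots,N,
\end{equation*}
where the backward difference is understood with the periodic extension $u^0 := u^N$. Then \eqref{eq:disc1}--\eqref{eq:disc2} is equivalent to $\mathcal{F}\mathbf{u} = \mathbf{f}$ with $\mathbf{f} = (f^1,\ldots,f^N)$. Because $\K$ is Lipschitz by (A1) and $\M$ is continuous $\H\to\H^*$ by (A2), the map $\mathcal{F}$ is Lipschitz from $\V^N$ (equipped with $\|\mathbf{u}\|^2 = \sum_n \|u^n\|_V^2$) to its dual.

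The heart of the argument is to show that $\mathcal{F}$ is strongly monotone with constant $\gamma$. For $\mathbf{u},\mathbf{v}\in\V^N$, write $e^n = u^n - v^n$; then
\begin{equation*}
\langle \mathcal{F}\mathbf{u}-\mathcal{F}\mathbf{v},\mathbf{u}-\mathbf{v}\rangle
\;=\; \sum_{n=1}^N \langle \M\dtau e^n, e^n\rangle \;+\; \sum_{n=1}^N \langle \K(u^n)-\K(v^n), e^n\rangle.
\end{equation*}
The second sum is bounded below by $\gamma \sum_n \|e^n\|_V^2$ by the strong monotonicity assumption. For the first sum, I would use symmetry and non-negativity of $\M$ together with the polarization-type identity
\begin{equation*}
\langle \M(e^n-e^{n-1}),e^n\rangle
\;=\; \tfrac{1}{2}\langle \M e^n,e^n\rangle - \tfrac{1}{2}\langle \M e^{n-1},e^{n-1}\rangle + \tfrac{1}{2}\langle \M(e^n-e^{n-1}),e^n-e^{n-1}\rangle.
\end{equation*}
Summing over $n=1,\ldots,N$, the telescoping differences cancel exactly because of the periodic identification $e^0 = e^N$, leaving only the last term, which is non-negative by (A2). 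Hence $\sum_n \langle \M\dtau e^n, e^n\rangle \ge 0$, and strong monotonicity of $\mathcal{F}$ with constant $\gamma$ follows.

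Given strong monotonicity and Lipschitz continuity of $\mathcal{F}:\V^N\to(\V^*)^N$, the Browder--Minty theorem (or equivalently, the Banach fixed-point theorem applied to $\mathbf{u}\mapsto \mathbf{u}-\alpha(\mathcal{F}\mathbf{u}-\mathbf{f})$ under the Riesz isomorphism for sufficiently small $\alpha>0$) yields a unique $\mathbf{u}\in\V^N$ solving $\mathcal{F}\mathbf{u}=\mathbf{f}$, which is precisely the unique solution of Problem~\ref{prob:disc}.

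The main obstacle, and also the key insight, is handling the $\M\dtau$-contribution: because $\M$ is only non-negative (and possibly degenerate), single-step estimates give at best non-expansivity in the $\M$-seminorm, which is insufficient. The periodic boundary condition $u^0=u^N$ is precisely what makes the telescoping identity collapse to a non-negative remainder, so that the strong monotonicity of $\K$ on $\V$ transfers directly to strong monotonicity of $\mathcal{F}$ on $\V^N$ without any degenerate-seminorm complications.
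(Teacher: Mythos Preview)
Your proposal is correct and follows essentially the same route as the paper: recast the time-periodic system as $\A(U)=F$ on $\V^N$, use the discrete polarization/telescoping identity together with periodicity $e^0=e^N$ to show the $\M\dtau$-part contributes non-negatively, conclude strong monotonicity with constant $\gamma$ and Lipschitz continuity, and then invoke Zarantonello's theorem (which is exactly the damped Banach fixed-point argument you mention parenthetically). The only cosmetic differences are notation and that the paper states the Lipschitz constant explicitly as $\tfrac{2C}{\tau}+L$.
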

\begin{proof}
We introduce $U=(u^1,\ldots,u^N)$ and $F=(f^1,\ldots,f^n)$ for abbreviation. Then Problem~\ref{prob:disc} can be written equivalently as
\begin{align} \label{eq:disc}
\text{Find } U \in \V^N : \qquad \A(U)=F \qquad \text{in } (\V^*)^N 
\end{align}
with nonlinear operator $\A : \V^N \to (\V^N)^*$ defined by $\A(U)^n = \M \dtau u^n + \K(u^n)$. 
It is known that $\dtau |a^n|^2 = 2 (\dtau a^n,a^n) - \tau |\dtau a^n|^2 \le 2 (\dtau a^n,a^n)$. Together with the properties of $\M$ and $\K$, we thus obtain 
\begin{align*}
\langle \A(U) &- \A(V), U-V \rangle \\
&=\sum\nolimits_{n=1}^N \langle \M \dt (u^n - v^n),u^n - v^n\rangle + \langle \K(u^n) - \K(v^n), u^n-v^n\rangle \\
&\ge \sum\nolimits_{n=1}^N \tfrac{1}{2} \dtau (\M (u^n-v^n),u^n-v^n) + \gamma \|u^n-v^n\|_\V^2 
= \|U - \V\|_{\V^N}^2.
\end{align*}
We used $\|U\|_{\V^N}^2 = \sum_n \|u^n\|_\V^2$ to abbreviate the squared norm on $\V^N$. This shows that $\A$ is strongly monotone. In a similar manner, one can see that 
\begin{align*}
\|\A(U)-\A(V)\|_{(\V^*)^N} 
\le \left(\tfrac{2C}{\tau} + L \right) \|U - V\|_{\V^N}.
\end{align*}
Hence $\A : \V^N \to (\V^*)^N$ is a strongly monotone and Lipschitz continuous operator. Existence of a unique solution to $\A(U) = F$ then follows from Zarantonello's fixed-point theorem; see \cite[Thm.~25.B]{Zeidler:1990a}. 
\qed
\end{proof}
The proof of Zarantonello's fixed-point theorem, and hence also of the previous result, provides an iterative method for the solution of \eqref{eq:disc1}--\eqref{eq:disc2}. 
The contraction factor however dependence on the ratio of the Lipschitz- and monotonicity constants, and hence degenerates with the time-step $\tau$ going to zero. 
In the following section, we propose an alternative iterative scheme which converges uniformly.

\section{Iterative solution strategy}
\label{sec:iter}

Let $\widehat \K : \V \to \V^*$ be a linear and continuous operator, and $(u^{1,0},\ldots,u^{N,0}) \in \V^N$ some given initial iterate.
For the numerical solution of \eqref{eq:disc1}--\eqref{eq:disc2}, we consider the fixed-point iteration defined by
\begin{alignat}{2}
\M \dtau u^{n,k+1} + \widehat \K u^{n,k+1} &= f^n + \widehat \K u^{n,k} - \K(u^{n,k}), \quad && n=1,\ldots,N, \label{eq:iter1} \\
\text{with} \qquad \qquad 
u^{0,k+1} &= u^{N,k+1}, && k \ge 0. \label{eq:iter2}
\end{alignat}
Abbreviating $U^k=(u^{1,k},\ldots,u^{N,k})$ and $U^{k+1}=(u^{1,k+1},\ldots,u^{N,k+1})$ as before, the system \eqref{eq:iter1}--\eqref{eq:iter2} can be written as fixed-point iteration $U^{k+1} = \Phi(u^k)$ with mapping $\Phi : \V^N \to \V^N$, $U^k \mapsto \Phi(U^k)=U^{k+1}$ defined implicitly via the equations above.
Convergence of this iteration is guaranteed by the following result. 
\begin{theorem} \label{thm:iter}
Let (A1)--(A2) hold and assume that 
$\widehat \K : \V \to \V^*$ is a linear, continuous, symmetric, and elliptic operator which satisfies 
\begin{align} \label{eq:A3} %\tag{A3}
\|\widehat \K (u-v) - (\K(u) - \K(v))\|_{\widehat \K^{-1}} \le q \|u-v\|_{\widehat \K} \qquad \forall u,v \in \V
\end{align}
with some $q < 1$. Then for any $U^0=(u^{1,0},\ldots,u^{N,0}) \in \V^N$, the iteration \eqref{eq:iter1}--\eqref{eq:iter2} converges linearly to the unique solution $U=(u^1,\ldots,u^N)$ of \eqref{eq:disc1}--\eqref{eq:disc2}, i.e., 
\begin{align} \label{eq:q}
\|U^{k+1}-U\|_{\widehat \K} \le q \, \|U^k-U\|_{\widehat \K}, \qquad\forall k \ge 0.
\end{align}
Here $\|U\|_{\widehat \K}^2 = \sum_{n=1}^N\|u^n\|_{\widehat \K}^2$ and $\|u\|_{\widehat \K}^2 = \langle \widehat \K u,u\rangle$ denote the square of the norms on $\V^N$ and $\V$ induced by the operator $\widehat K$, and similarly $\|F\|_{\widehat \K^{-1}}^2 = \sum_{n=1}^N\|f^n\|_{\widehat \K^{-1}}^2$ and $\|f\|_{\widehat \K^{-1}}^2 = \langle \widehat \K^{-1} f,f\rangle$ define corresponding squared norms on $(\V^*)^N$ and $\V^*$. 
\end{theorem}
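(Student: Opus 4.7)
The plan is to establish the contraction \eqref{eq:q} by studying the error equation in the $\widehat \K$-induced inner product, with the periodic coupling killing all boundary contributions from $\M$. Before the main argument I would briefly check well-posedness of the iteration: the right-hand side of \eqref{eq:iter1} lies in $(\V^*)^N$, and since $\widehat \K$ is linear, continuous, symmetric, and $\V$-elliptic it is in particular strongly monotone and Lipschitz, so Theorem~\ref{thm:disc} applied to \eqref{eq:iter1}--\eqref{eq:iter2} with $\K$ replaced by $\widehat \K$ produces a unique $U^{k+1} \in \V^N$. Hence $\Phi : \V^N \to \V^N$ is well defined.

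For the contraction itself, I would first rewrite \eqref{eq:disc1} as $\M \dtau u^n + \widehat \K u^n = f^n + \widehat \K u^n - \K(u^n)$ and subtract it from \eqref{eq:iter1}; setting $e^{n,k} := u^{n,k} - u^n$, the errors satisfy
\[
\M \dtau e^{n,k+1} + \widehat \K e^{n,k+1} = \widehat \K e^{n,k} - (\K(u^{n,k}) - \K(u^n)) =: r^{n,k},
\]
together with the periodic coupling $e^{0,k+1} = e^{N,k+1}$. Testing with $e^{n,k+1}$ and bounding the right-hand side by duality in the $\widehat \K$-norm gives, at each $n$,
\[
\langle \M \dtau e^{n,k+1}, e^{n,k+1}\rangle + \|e^{n,k+1}\|_{\widehat \K}^2 \le \|r^{n,k}\|_{\widehat \K^{-1}}\, \|e^{n,k+1}\|_{\widehat \K}.
\]

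Summing over $n=1,\ldots,N$ and reusing the identity $\langle \M \dtau a^n, a^n\rangle \ge \tfrac{1}{2} \dtau \langle \M a^n, a^n\rangle$ from the proof of Theorem~\ref{thm:disc}, the $\M$-contribution telescopes to $\tfrac{1}{2\tau}(\langle \M e^{N,k+1}, e^{N,k+1}\rangle - \langle \M e^{0,k+1}, e^{0,k+1}\rangle)$, which vanishes by periodicity. A discrete Cauchy--Schwarz in the index $n$ on the right then yields $\|E^{k+1}\|_{\widehat \K} \le \|R^k\|_{\widehat \K^{-1}}$, while assumption \eqref{eq:A3} applied componentwise gives $\|r^{n,k}\|_{\widehat \K^{-1}} \le q \|e^{n,k}\|_{\widehat \K}$, and hence $\|R^k\|_{\widehat \K^{-1}} \le q \|E^k\|_{\widehat \K}$. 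Combining these two bounds produces \eqref{eq:q}, and Banach's fixed-point theorem identifies the limit of $U^k$ as the unique solution of \eqref{eq:disc1}--\eqref{eq:disc2}. The main obstacle I expect is precisely the cancellation of the $\M$-boundary contributions via periodicity; without it one would only control the evolution of $\langle \M e^{n,k+1}, e^{n,k+1}\rangle$ in time and the contraction factor would degenerate as $\tau \to 0$, mirroring the shortcoming of Zarantonello's iteration noted after Theorem~\ref{thm:disc}.
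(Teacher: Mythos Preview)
Your proposal is correct and follows essentially the same route as the paper: invoke Theorem~\ref{thm:disc} for well-posedness of each iteration step, subtract \eqref{eq:disc1} from \eqref{eq:iter1} to obtain the error equation, test with $e^{n,k+1}$, use $\langle \M \dtau a^n,a^n\rangle \ge \tfrac12 \dtau \langle \M a^n,a^n\rangle$ together with periodicity to discard the $\M$-contribution after summation, and then apply \eqref{eq:A3} componentwise followed by Cauchy--Schwarz in $n$ to arrive at \eqref{eq:q}. The only cosmetic difference is that the paper keeps the duality pairing $\langle r^{n,k}, e^{n,k+1}\rangle$ intact until after summation, whereas you bound it by $\|r^{n,k}\|_{\widehat \K^{-1}}\|e^{n,k+1}\|_{\widehat \K}$ already at the level of each $n$; both orderings lead to the same final estimate.
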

\begin{proof}
The operator $\widehat \K : \V \to \V^*$ is continuous and elliptic, and hence satisfies the assumption (A1)--(A2). We may thus apply Theorem~\ref{thm:disc} to see that the iteration \eqref{eq:iter1}--\eqref{eq:iter2} is well-defined. In particular, $\Phi : \V^N \to \V^N$, $U \to \Phi(U)$ is a self-mapping on $\V^N$.
Similar to the proof of Theorem~\ref{thm:disc}, we see that 
\begin{align*}
\|U^{k+1} - U\|_{\K}^2 
&\le \sum\nolimits_{n=1}^N \langle \dtau (u^n -v^n), u^n-v^n \rangle + \langle \widehat \K (u^{n,k+1} - u^n), u^{n,k+1} - u^n \rangle \\
&= \sum\nolimits_{n=1}^N \langle \widehat \K (u^{n,k} - u^n) - (\K(u^{n,k}) - \K(u^n)), u^{n,k+1} - u^n\rangle. % = (*).
\end{align*}
In the second step, we used the definitions of $U^{k+1}$ and $U$ provided by \eqref{eq:iter1}--\eqref{eq:iter2} and \eqref{eq:disc1}--\eqref{eq:disc2}, respectively.  
By application of \eqref{eq:A3}, we  obtain 
\begin{align*}
\langle \widehat \K (u^{n,k} &- u^n) - (\K(u^{n,k}) - \K(u^n)), u^{n,k+1} - u^n\rangle \\
&\le \|\widehat \K (u^{n,k} - u^n) - (\K(u^{n,k}) - \K(u^n))\|_{\widehat \K^{-1}} \|u^{n,k+1} - u^n\|_{\widehat \K} \\
&\le q \|u^{n,k} - u^n\|_{\widehat \K} \|u^{n,k+1}-u^n\|_{\widehat \K}. 
\end{align*}
Inserting this into the previous formula and applying a Cauchy-Schwarz inequality for the sum, we arrive at 
\begin{align*}
\|U^{k+1} - U\|_{\widehat \K}^2 \le q \|U^{k}-U\|_{\widehat \K } \|U^{k+1} - U\|_{\widehat \K}
\end{align*}
which implies the estimate \eqref{eq:q} and the remaining assertions of the theorem. \qed
\end{proof}

\begin{remark} \label{rem:contraction}
Let $\R : \V^* \to \V$ be the Riesz-isomorphism on the Hilbert space $\V$ defined by $(\R f,v)_{\V} = \langle f,v\rangle$ for all $v \in \V$. 
Further set $\widehat \K = \frac{1}{\omega} \R^{-1}$ with $\omega>0$ to be determined. Then we can express
$\|u\|_{\widehat \K}^2 = \omega \|u\|_\V^2$ and obtain
\begin{align*}
\|\widehat \K (u-v) &- (\K(u) - \K(v))\|_{\widehat \K^{-1}}^2 \\
&= \|u-v\|_{\widehat \K}^2 - 2 \langle \K(u) - \K(v), u-v \rangle + \|\K(u) - \K(v)\|^2_{\widehat \K^{-1}} \\
&\le \|u-v\|_{\widehat \K}^2 (1 - 2 \omega \gamma + \omega^2 L^2).
\end{align*}
In the second step we used the monotonicity and Lipschitz estimates provided by assumption (A1) and the relation between the norms stated above. For any $ 0 < \omega < L^2/\gamma$, the factor $q(\omega)^2=1-2 \omega \gamma + \omega L^2$ is smaller than one. This shows that condition \eqref{eq:q} can always be satisfied with a uniform constant $q<1$. 
\end{remark}

\begin{remark} \label{rem:parallel}
Every step of the fixed-point method \eqref{eq:iter1}--\eqref{eq:iter2} requires the solution of a semi-discrete time-periodic and time-invariant system. Via discrete Fourier transforms, this system can be converted into block-diagonal form with complex symmetric operators $i \omega^k \M + \widehat \K$ on the diagonal. Such a strategy was used in~\cite{Biro:2006}. 
Alternatively, the systems can be solved by multigrid-in-time algorithms~\cite{GanderNeumueller:2016,Vandewalle92}. 
Both strategies essentially lead to parallel-in-time algorithms. 
\end{remark}

\begin{remark} \label{rem:fixedpoint}
The arguments used in the previous proof can in principle also be applied  to the original system \eqref{eq:sys1}--\eqref{eq:sys2}. This leads to the fixed-point iteration 
\begin{alignat*}{2}
\M \dt u^{k+1}(t) + \widehat \K(u^{k+1}(t)) &= f(t) + \widehat \K u^{k}(t) - \K(u^k(t)), \qquad && 0 \le t \le T \\
\M u^{k+1}(0) &= \M u^{k+1}(0). 
\end{alignat*}
Every step of this iteration amounts to the solution of a linear time-periodic and time-invariant problem. Well-posedness of the iteration can then be obtained from standard result \cite{Showalter:1997}, and convergence of the iterates in the space $L^2(0,T;V)$ follows with the same reasoning as above. 
\end{remark}

\section{Numerical illustration}
\label{sec:num}

In order to illustrate the applicability and efficiency of the proposed method, we consider the time-periodic magnetic fields in a power transformer. The problem setup is adopted from \cite{Plasser:2018}, but to simplify the reproducibility of the results, we here consider only a two-dimensional setting. 

\medskip 
\noindent 
\textbf{Test problem.}
The domain $\Omega \subset \RR^2$ represents the cross-section of the transformer, which is covered by different materials; see  Figure~\ref{fig:domain} for a sketch. 
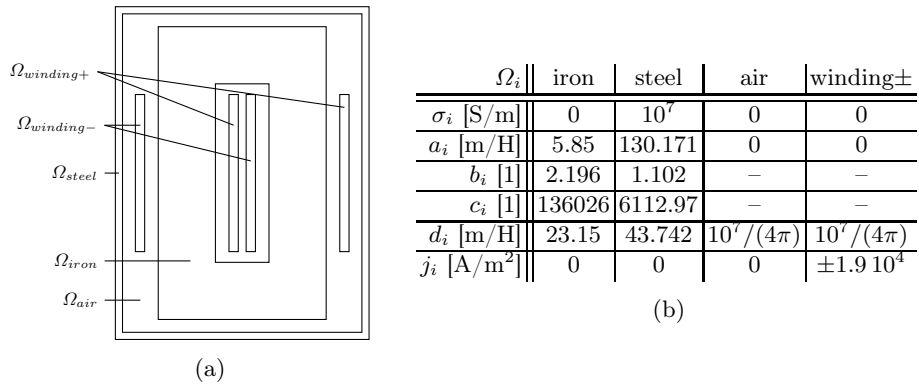
\begin{figure}[ht!]
    \centering%
    \begin{subfigure}[c]{.45\textwidth}
        \begin{tikzpicture}[scale=.095]%
            % outer 
            \draw (0,0) -- (355mm, 0) -- (355mm,466mm) -- (0,466mm) -- cycle;
            % steel inner
            \draw (10mm, 10mm) -- (345mm, 10mm) -- (345mm,456mm) -- (10mm, 456mm) -- cycle;
            % iron core
            \draw (60mm, 28mm) -- (295mm, 28mm) -- (295mm, 438mm) -- (60mm, 438mm) -- cycle;
            \draw (140mm, 108mm) --  (215mm, 108mm) -- (215mm, 358mm) -- (140mm, 358mm) -- cycle;
            % outer wiring
            \draw (28mm, 123mm) --  ++(13mm, 0) --  ++(0, 220mm) --  ++(-13mm, 0) -- cycle;
            \draw (159mm, 123mm) --  ++(13mm, 0) --  ++(0, 220mm) --  ++(-13mm, 0) -- cycle;
            \draw (183mm, 123mm) --  ++(13mm, 0) --  ++(0, 220mm) --  ++(-13mm, 0) -- cycle;
            \draw (314mm, 123mm) --  ++(13mm, 0) --  ++(0, 220mm) --  ++(-13mm, 0) -- cycle;
            % labels
            \draw (5mm, 233mm) -- ++(-2cm, 0) node[left,scale=.75] {$ \Omega_{steel} $};
            \draw (105mm, 110mm) -- ++(-12cm, 0) node[left,scale=.75] {$ \Omega_{iron} $};
            \draw (35mm, 55mm) -- ++(-5cm, 0) node[left,scale=.75] {$ \Omega_{air} $};
            \draw (165.5mm, 300mm) -- (-25mm, 375mm) node[left,scale=.75] {$ \Omega_{winding+} $};
            \draw (320.5mm, 325mm) -- (-25mm, 375mm);
            \draw (35mm, 300mm) -- ++(-5cm, 0) node[left,scale=.75] {$ \Omega_{winding-} $};
            \draw (189.5mm, 250mm) -- ++(-20.5cm, 5cm);
        \end{tikzpicture}%
        \caption{}
        \label{fig:domain}%
    \end{subfigure}%
    \begin{subfigure}[c]{.55\textwidth}
        \begin{tabular}{r||c|c|c|c}%
            $ \Omega_i $        & iron & steel & air & winding$\pm$ \\ \hline \hline
            $\sigma_i$ [S/m]    & 0 & $10^7$ & 0 & 0  \\ \hline
            $a_i$ [m/H]         & $5.85$ & $130.171$& 0 & 0  \\ \hline
            $b_i$ [1]            & $2.196$ & $1.102$ & -- & -- \\ \hline
            $c_i$ [1]            & $136026$ & $6112.97$ & -- & -- \\ \hline
            $d_i$ [m/H]         & $23.15$  &  $43.742$ & $ 10^7 / (4\pi) $ & $ 10^7 / (4\pi) $\\ \hline
            $j_i$ [A/m$^2$]     & 0  & 0  &      0             & $\pm 1.9\,10^{4} $ 
        \end{tabular}%
        \caption{}
        \label{tab:coef}
    \end{subfigure}%
    \caption{Cross-section of transformer (left); 
    table with material parameters (right).}%
    \vspace*{-30pt}
\end{figure}
The $z$-component $u$ of the magnetic vector potential is governed by 
\begin{alignat*}{2}
    \sigma \partial_t u - \div ( \nu(|\nabla u|) \nabla u(t) ) &= j \qquad &&\text{in } \Omega, \ t \in [0,T]\\
    u &= 0 \qquad &&\text{on } \partial\Omega, \ t \in [0,T]
\end{alignat*}
with $j$ denoting the $z$-component of the current in the windings. All fields are assumed time-periodic with period $T= 0.02 $ [s]. 
The electric conductivity $\sigma$ and magnetic reluctivity~$\nu$ are assumed to be of the form 
\begin{align*}
\sigma = \sigma_i 
\qquad \text{and} \qquad 
\nu(s) = a_i \min\{\exp( b_i s^2 ),c_i\} + d_i \qquad \text{in } \Omega_i
\end{align*}
with material dependent constants $\sigma_i,\ldots,d_i$. The current density is given by 
\begin{align*}
    j(t)= j_i \, \cos(2\pi t / T) 
\end{align*}
with piecewise constant current amplitudes $j_i$. The values for the coefficients used in our simulations are summarized in Figure~\ref{tab:coef}.

\medskip 

\noindent 
\textbf{Simulation setup.}
The problem is discretized in space by piecewise linear finite elements over a triangular mesh. The dimension of the corresponding finite element spaces is denoted by $N_V$. 
For time-discretization, we use the implicit Euler method with constant time step $\tau = T/N$. 
This leads to a finite-dimensional version of \eqref{eq:disc1}--\eqref{eq:disc2}. 
According to our analysis, we expect convergence of the proposed iterative solvers with iterations numbers independent of $N_V$ and $N$. 
In our numerical tests, we compare the following three solution strategies: 
\begin{itemize}[leftmargin=2.5em,topsep=0.5em,parsep=0.5em]
\item[(M1)] the fixed-point iteration proposed in this paper; the linearized systems \eqref{eq:iter1}--\eqref{eq:iter2} are solved by the multigrid-in-time method proposed in \cite{GanderNeumueller:2016};
\item[(M2)] the Newton-method applied to \eqref{eq:disc1}--\eqref{eq:disc2}; the resulting linearized systems are solved by the multigrid-in-time method proposed in \cite{GanderNeumueller:2016}; 
Armijo back-tracking is applied to guarantee decay in the residual;
\item[(M3)] a simple nonlinear time-stepping method; the iteration \eqref{eq:disc1} is performed for $n \ge 0$ until the periodic limit cycle is reached; the nonlinear systems in every time-step are solved by a Newton method with Armijo back-tracking.
\end{itemize}
\textbf{Initialization.}
All iterations are initialized by solving the decoupled nonlinear static problems that result from setting $\sigma \equiv 0$. 
The iterations are stopped when the residual has been reduced sufficiently, i.e., when 
\begin{align*} 
\|F - \A(U^k)\|_{\ell^2} \le tol \, \|F - \A(U^0)\|_{\ell^2}.
\end{align*}
Here $F-\A(U)$ amounts to the residual vector for the finite element approximation of \eqref{eq:disc1}--\eqref{eq:disc2}. 
For our computational tests, we choose the tolerance $tol=10^{-4}$. 

\medskip 

\noindent 
\textbf{Numerical results.}
The methods (M1)--(M3) are implemented using the finite element library MFEM 
\cite{mfem}, with the spatial solvers provided by \emph{Hypre}\footnote{\url{https://github.com/hypre-space/hypre}} and \emph{SuiteSparse}\footnote{\url{https://github.com/DrTimothyAldenDavis/SuiteSparse}}. 
The comparison runs are performed on a single node of Dane\footnote{\url{https://hpc.llnl.gov/hardware/compute-platforms/dane}}, which consists of a 112-core Intel Sapphire Rapid processor, while
the scaling tests were performed on multiple nodes of the same platform.
We present the number of iterations for the methods (M1)--(M3) in Table~\ref{tab:runtime}, where the corresponding wall time is included in parentheses. Iterations marked with $^*$ were stopped early due to slow convergence. 
We also report the run times for the initialization step, which  requires the solution of a sequence of decoupled nonlinear elliptic problems. Methods (M1) and (M2) use 8 MPI ranks, while method (M3) utilizes multi-threading for the linear solvers. In Table~\ref{tab:scaling}, we present the results of a parallel scaling study for (M1), for a problem
with $N_V = 8208$ spatial dofs. 
\begin{table}[ht!]
    \vspace*{-10pt}
    \begin{subtable}{\textwidth}
        \centering%
        \setlength\tabcolsep{3.6mm}
         \begin{tabular}{rlllllllll}\toprule
            $N$ & $N_V$ & init  & (M1) & (M2) & (M3) \\ \midrule
            128 & 8208  & \fmtsec{28.8542}  & \textbf{13} (\fmtsec{8.25036}) & \textbf{5} (\fmtsec{12.7918}) & \textbf{10}$^*$ (\fmtsec{153.181}) \\
            128 & 32451 & \fmtsec{190.374}  & \textbf{10} (\fmtsec{24.6771}) & \textbf{5} (\fmtsec{49.1059}) & \textbf{10}$^*$ (\fmtsec{647.643}) \\  \midrule
            256 & 8208  & \fmtsec{58.3237}  & \textbf{13} (\fmtsec{16.0703}) & \textbf{5} (\fmtsec{24.4229}) & \textbf{10}$^*$ (\fmtsec{306.23})  \\
            256 & 32451 & \fmtsec{406.597}  & \textbf{10} (\fmtsec{49.6497}) & \textbf{5} (\fmtsec{94.2119}) & \textbf{10}$^*$ (\fmtsec{1288.96}) \\ \midrule
            512 & 8208  & \fmtsec{135.188}  & \textbf{13} (\fmtsec{30.3863}) & \textbf{5} (\fmtsec{48.0321}) & \textbf{10}$^*$ (\fmtsec{617.965}) \\    
            512 & 32451 & \fmtsec{828.546}  & \textbf{10} (\fmtsec{98.5296}) & \textbf{5} (\fmtsec{187.653}) & \textbf{10}$^*$ (\fmtsec{2577.77}) \\
            \bottomrule
        \end{tabular}
        \caption{Comparison of methods (M1) -- (M3) for different discretization parameters.}
        \label{tab:runtime}
    \end{subtable}
    \begin{subtable}{\textwidth}
        \centering%
    \setlength\tabcolsep{1.4mm}
    \begin{tabular}{rllllllll}\toprule
        $N \, \backslash \, R$ & 16 & 32 & 64 & 128 & 256 & 512  \\ 
                             \midrule
        256                  & \textbf{13} (\fmtsec{9.20935}) & \textbf{13} (\fmtsec{5.29112}) & \textbf{13} (\fmtsec{3.45101}) & \textbf{13} (\fmtsec{2.66415}) & --                             & -- \\
        512                  & \textbf{13} (\fmtsec{16.8417}) & \textbf{13} (\fmtsec{9.18293}) & \textbf{13} (\fmtsec{5.37856}) & \textbf{13} (\fmtsec{3.59071}) & \textbf{13} (\fmtsec{3.03495}) & -- \\
        1024                 & \textbf{13} (\fmtsec{31.0817}) & \textbf{13} (\fmtsec{16.3910}) & \textbf{13} (\fmtsec{9.02057}) & \textbf{13} (\fmtsec{5.43837}) & \textbf{13} (\fmtsec{4.03325}) & \textbf{13} (\fmtsec{3.78642})\\
        2048                 & --                             & --                             & \textbf{13} (\fmtsec{16.423})  & \textbf{13} (\fmtsec{9.13183}) & \textbf{13} (\fmtsec{6.05007}) & \textbf{13} (\fmtsec{5.04528})\\
        \bottomrule
    \end{tabular}
    \caption{Weak and strong scaling results for (M1) with $N_V = 8208$ and MPI ranks $R$.}
    \label{tab:scaling}
    \end{subtable}
    \caption{Iteration counts and wall times.}
    \label{tab:results}
    \vspace*{-40pt}
\end{table}

\bigskip 

\noindent 
\textbf{Discussion of the numerical results.}
As predicted by our theoretical results, the proposed method (M1) converges with an almost fixed number of iteration, independent of the discretization parameters. A similar observation holds for the Newton method (M2). 
Although the latter requires fewer outer iterations, 
the solution of the linearized time-periodic problems in every step of the iteration is somewhat faster for method (M1), and hence the overall computation times are smaller. 
Let us note that convergence of the multigrid-in-time method for the Newton system is not fully covered by theory yet. An analysis of this aspect would be an interesting topic for future research. 
Even after 10 cycles over all time steps, the simple time-stepping scheme (M3) did not converge to the desired accuracy. Moreover, this method is sequential in time, and hence run-times largely exceed the other methods which can be executed parallel in time.

\bibliographystyle{splncs04}
\bibliography{references}

\begin{credits}
    \subsubsection{\ackname} 
    This work was supported by the DFG/FWF Collaborative Research Centre CREATOR (DFG: Project-ID 492661287/TRR 361; FWF: 10.55776/F90).    
    The authors would like to thank Hillary Fairbanks for providing access to Dane.
    
    \subsubsection{\discintname}
    The authors have no competing interests to declare that are
    relevant to the content of this article.
    \end{credits}

\end{document}